\def\arxive{1}

\if\arxive1
	\documentclass[lettersize,journal,twocolumn,10pt]{article}
	\usepackage[a4paper, total={7in, 9in}]{geometry}
	\usepackage{mathptmx}
\else
	\documentclass[lettersize,journal]{IEEEtran}
\fi

\usepackage{amsmath,amsfonts}
\usepackage{algorithmic}
\usepackage{algorithm}
\usepackage{array}
\usepackage{subfig}
\usepackage{textcomp}
\usepackage{stfloats}
\usepackage{url}
\usepackage{verbatim}
\usepackage{graphicx}
\usepackage{cite}

\usepackage[usenames,dvipsnames]{xcolor}
\usepackage{mathtools}
\usepackage{amssymb}
\usepackage{amsthm}

\usepackage[hidelinks=True]{hyperref}
\usepackage{cleveref}
\usepackage{algorithmic}
\graphicspath{ {images/} }

\newcommand{\N}{\mathbb{N}}
\newcommand{\R}{\mathbb{R}}

\newcommand{\conv}[2]{#1\ast #2}

\DeclareMathOperator{\prox}{prox}
\DeclareMathOperator{\sign}{sign}
\DeclareMathOperator*{\argmin}{argmin}

\newtheorem{remark}{Remark}
\newtheorem{proposition}{Proposition}

\usepackage[normalem]{ulem}

\begin{document}

\title{Unsupervised energy disaggregation via convolutional sparse coding\thanks{\copyright~2023 IEEE, \url{https://ieeexplore.ieee.org/document/10288159}, \cite{aarset2023unsupervised}.}}

\author{Christian Aarset, Andreas Habring, Martin Holler, Mario Mitter
\thanks{The SOLGENIUM Project: Supported by the Austrian Research Promotion Agency (FFG) (Grant 881561).}%
\if\arxive0
\thanks{Manuscript received XXX YYY, 2022; revised AAA BBB, 2022.}
\fi}

\markboth{journal}%
{shell}

\if\arxive0
	\IEEEpubid{XXXX--XXXX/XX\$XX.XX~\copyright~2022 IEEE}
\fi

\maketitle

\begin{abstract}

In this work, a method for unsupervised energy disaggregation in private households equipped with smart meters is proposed. The method aims to classify power consumption as \emph{active} or \emph{passive}, granting the ability to report on the residents' activity and presence without direct interaction. This lays the foundation for applications like non-intrusive health monitoring of private homes. 

The proposed method is based on minimizing a suitable energy functional, for which the iPALM (inertial proximal alternating linearized minimization) algorithm is employed, demonstrating that various conditions guaranteeing convergence are satisfied. 

In order to confirm feasibility of the proposed method, experiments on semi-synthetic test data sets and a comparison to existing methods are provided.

\end{abstract}

\if\arxive0
	\begin{IEEEkeywords}
	Energy disaggregation, non-intrusive load monitoring, health monitoring, convolutional sparse coding
	\end{IEEEkeywords}
\fi

\section{Introduction} \label{sec:intro}
\if\arxive0
	\IEEEPARstart{S}{mart}
\else
Smart
\fi
meters -- energy meters capable of digital communication -- are becoming increasingly available in European domestic households. The regulation \emph{Intelligente Messger\"ate-Einf\"uhrungsverordnung – IME-VO} \cite{messgeraete_verordnung_source} derived from the EU directive \cite{messgeraete_eu_richtlinie_source} dictates that 95\% of all Austrian meter points have to be equipped with smart meters by the end of 2024. As a result, regular readings of household energy consumption will be available on a large scale, potentially enabling a variety of insights into power consumption, usage patterns and human activity. For an overview of potential health care applications of smart meters see \cite{fell2017energising,paxman2020smart}. As a lack of human activity in a household of, e.g., an elderly or care-dependent person over an extended period of time could be an indication of a potential emergency, in this work we focus on the possibility of detecting human activity, respectively a lack thereof, based on smart meter readings. Such an automatic detection would alleviate the need for the consumer to report incidents manually, and bypass the usage of intrusive surveillance devices.

Smart meters measure the aggregate energy consumption of a household rather than the consumptions of individual devices. Obtaining detailed information about the composition of the aggregate data is difficult, as it requires either directly measuring the energy consumption at many different points in the electrical circuit, or computational separation of the smart meter signal into distinct contributions, which is inherently ill-posed. For the latter problem, under the term computational \emph{energy disaggregation} or \emph{non-intrusive load monitoring} (NILM), there exists a variety of methods and we give an overview over existing literature in the following.

\subsection*{Related works}
Reviews of existing methods for NILM can be found in \cite{dash2022electric,gopinath2020energy}.
NILM methods are frequently split into the categories \emph{supervised} if they use labeled training data and \emph{unsupervised} in the converse case. While the latter case is substantially more difficult, it is also relevant for practical applications, due to the cost and difficulty of collecting sufficient amounts of labeled training data.
\paragraph{Supervised methods} One of the first approaches introduced in \cite{hart1992nonintrusive} is based on matching the changes in power consumption (event detection) to certain devices in the household. In \cite{he2016non}, the authors use graph signal processing to predict changes in the state of individual devices. There are many works on different versions of hidden Markov models (HMM), and in particular factorial versions thereof \cite{zoha2013low,bonfigli2017non,kong2016hierarchical,raiker2018approach}. In \cite{kolter2010energy,pandey2019structured}, the authors propose sparse coding for NILM, and in \cite{singh2017deep}, this approach is extended to a deep sparse coding model. Deep learning approaches have also been applied to NILM. In \cite{kaselimi2019multi}, the authors propose to use recurrent convolutional neural networks in a sequence-to-sequence fashion. In \cite{kaselimi2020energan}, the authors employ a sequence-to-sequence generative adversarial network (GAN) for energy disaggregation, and in \cite{kaselimi2020energan++}, this model is improved by using a deep learning recurrent classifier to model the discriminator component of the GAN. For more work on deep learning for NILM, see also \cite{li2021non,kaselimi2019bayesian,moradzadeh2021practical,
zhou2020sequence,murray2019transferability}.

\if\arxive0
	\IEEEpubidadjcol
\fi

\paragraph{Unsupervised methods}
By employing a priori assumptions on the device distributions HMMs can be used in an unsupervised manner \cite{makonin2015exploiting,johnson2013bayesian,
kolter2012approximate,pattem2012unsupervised,
kim2011unsupervised,aiad2016unsupervised,jia2015fully}. In \cite{parson2014unsupervised}, the authors propose a method of tuning pre-trained general appliance models of an HMM to devices of a specific household by only using aggregate data from this household. In \cite{liao2014non}, the authors propose a method comprised of event detection, feature extraction, and classification. The method is proposed in an unsupervised and a supervised manner, using a decision tree and dynamic time wrapping for classification, respectively. Similar strategies are considered in \cite{liu2019low} and \cite{liu2020secure}, and in \cite{zhao2016training}, such an approach is combined with graph signal processing methods. In \cite{miyasawa2019energy}, the authors propose to solve the NILM problem using non-negative matrix factorization, and also a shift-invariant version thereof.

\subsection*{Contribution}
In this work, we propose a convolutional sparse coding (CSC) approach similar to \cite{kolter2010energy,pandey2019structured}. Contrary to these works, however, our method completely omits any prior dictionary learning using labeled data. We believe this poses a clear benefit, as we are able to circumvent several crucial issues: As elaborated in the introduction of \cite{wiese2019open}, numerous difficulties are encountered in the acquisition, sharing, and reuse of household energy data, which leads to a rather limited amount of training data. Avoiding difficulties related to the scarcity of data by using a representative dictionary of general appliance models currently only works well for simple devices \cite[Section II, D]{makonin2015exploiting} especially since a sufficiently extensive dictionary is not yet available \cite[Section 3.5.7]{najafi2018data}. Obtaining a dictionary by measuring one activity cycle of each electrical device in the household, on the other hand, poses an excessive and intrusive installation effort, and is consequently undesirable. Altogether, we therefore believe that methods not relying on training data are crucially needed for NILM. Moreover, our experiments suggest that, even when training data is available, our unsupervised approach is more robust compared to supervised ones with respect to shifts in distribution between training and testing data. (see \Cref{table:unsupervised_activity}, \Cref{fig:activity} and also \cite{shi2022robust,liu2021self}). 

Motivated by the aforementioned monitoring applications, in this work we will focus on experimental results for activity prediction, but provide also results for full disaggregation into individual devices. For activity prediction, we disaggregate into an \emph{active} and a \emph{passive channel}, where the presence of the former will be used as an indicator of human activity.

\section{Modeling and Problem formulation}
The power consumption $u\in\R^m$ of a household, with $m\in\N$ the number of evenly spaced time points, is given as the sum of the power consumptions of all individual devices $u^i\in\R^m$ for $i=1,2,\dots N$ contained in this household as
\[
u = \sum\limits_{i=1}^N u^i + \epsilon,
\]
where $\epsilon$ denotes measurement noise. The goal of NILM is to recover all $u^i$ given only knowledge of $u$. Due to the severe ill-posedness of this problem, additional modeling assumptions have to be made in order to obtain reasonable estimates of the $u^i$. Specifically, our model builds on the observation that the main contributors to a household's energy consumption consist of disparate device power consumption peaks, or simply \emph{device characteristics}. That is, whenever a device is used for a fixed period of time, its individual power consumption over time will consistently be of the same approximate shape and magnitude. Let for instance $p\in\R^{2q+1}$ be one activation cycle of length $2q+1$ of a dishwasher. Assume at each of the times tamps $t_1, t_2,\dots$ the dishwasher is activated for one cycle. Then, via the convolution, the dishwasher's power consumption can be computed as
\[c*p := \Bigl(\sum_{\Delta q = -q}^q \tilde{c}_{j+q+\Delta q}\tilde{p}_{q+1-\Delta q}\Bigr)_{j=1}^m \in \R^m\]
where $c = (0,\dots,0,1,0,\dots,0, 1,0,\dots,0)\in\R^{m+2q}$ with the ones appearing at indices $t_1,t_2,\dots$. Figuratively speaking, the \emph{coefficient} $c$ places the activation $p$ at the respective times. Inspired by this we model each device $u^i$ as a convolution $c^i\ast p^i$ -- or a sum thereof for devices with multiple different characteristic activation cycles -- of an \emph{atom} $p^i\in\R^{2q+1}$ resembling one activation cycle of device $i$ and a \emph{coefficient} $c^i\in\R^{m+2q}$ indicating the times of activation of device $i$.
Explicitly, this leads to the following formulation:
\[
u = \sum\limits_{i=1}^N c^i\ast p^i + \epsilon
\]
For the sake of simplifying notation we also denote the sum-of-convolutions map for multiple devices $c\ast p := \sum_{i=1}^N c^i\ast p^i$.
In the application of activity detection, we are particularly interested in the case where we have several \emph{active channels}, representing the characteristics of all devices that require human interaction -- thus indicating human activity -- and one \emph{passive channel}, consisting of the characteristics of all remaining devices. Examples of devices contributing to the active channels include, but are not limited to, ovens, washing machines and hair dryers, while devices such as fridges and routers might contribute to the passive channel. Devices in the active channels are assumed to be needed only a few times in the span of a single day, in the sense that the number of times they are activated each day is relatively low. This leads to a natural sparsity of incidents of each device characteristic on the temporal axis (see \Cref{fig:activity}, top right). In summary we desire the following properties for our atoms and coefficients:

\begin{itemize}

\item The atoms should be bounded, so that scaling is left to the coefficients. Atoms and coefficients should be positive.%

\item With $(c^1,p^1)$ representing the passive channel, we let all remaining pairs, that is, $(c^i,p^i)$ for $i\geq 2$, serve to explain the active channels; thus, active energy consumption is approximated by $\sum_{i=2}^Nc^i\ast p^i$. The coefficients $c^i$, $i\geq 2$ will be penalized in $\|\cdot\|_1$-norm, as this norm emulates the previously mentioned temporal sparsity (see \cite{hastietibshiraniwainwright2015statsparse}), while remaining analytically tractable. However, as some passive devices, such as a router, may consume power throughout most of the day, a sparsity-enforcing penalty term for $c^1$ becomes undesirable. Accordingly, we penalize $c^1$ in the $\|\cdot\|_2$-norm.

\end{itemize} 

Combining the above leads to the minimization problem
\begin{equation}\label{eq:minimization}
\begin{aligned}
	\min_{c,p} & \|u - \conv{c}{p}\|^2_2 + \lambda_{\mathfrak{p}}\|c^1\|_2 + \lambda_{\mathfrak{a}}\sum_{i=2}^N\|c^i\|_1 \\
	& \text{s.t. }  c\geq 0,\,  p\geq 0,\, \|p^i\|_2\leq 1 \text{ for all }i=1,\ldots,N,
\end{aligned}
\end{equation}
with $\lambda_{\mathfrak{p}}$, $\lambda_{\mathfrak{a}}>0$; the positivity constraints $c\geq 0$ and $p\geq 0$ are meant in the componentwise sense. The first term in \eqref{eq:minimization} ensures the solution to approximately solve the energy disaggregation problem and the remaining terms incorporate constraints and ensure well-posedness.

\section{Inertial Proximal Alternating Linearized Minimization}

As \eqref{eq:minimization} does not generally allow for an analytical solution, we require an appropriate algorithm to numerically solve the minimization problem. In that respect, we face two main difficulties with our problem formulation: It is neither jointly convex nor differentiable in $(c,p)$, meaning standard convex or gradient descent-based algorithms cannot be used as is. To overcome this, we employ the inertial Proximal Alternating Linearized Minimization (iPALM) algorithm as proposed in \cite{pock2016ipalm}, which is suitable for nonconvex, nonsmooth optimization problems of a particular structure. Therefore, we first rewrite \eqref{eq:minimization} into a form where the iPALM algorithm is applicable, i.e., we consider
\begin{equation}\label{eq:minimization_reformulated}
	\min_{c,p} F(c,p), \text{ where } F(c,p) := R(c,p) + f_1(c) + f_2(p).
\end{equation}
Here, $R(c,p):= \|u - \conv{c}{p}\|^2_2$,
\begin{align*}
	f_1(c) & := \begin{cases}\lambda_{\mathfrak{p}}\|c^1\|_2 + \lambda_{\mathfrak{a}}\sum_{i=2}^N\|c^i\|_1, & c\geq 0, \\ \infty, & \text{otherwise,}\end{cases} \\
	f_2(p) & := \begin{cases}0, & p\geq 0 \text{ and } \max_{1\leq i\leq N}\|p^i\|_2\leq 1, \\ \infty, & \text{otherwise.}\end{cases}
\end{align*}

To ensure convergence of iPALM according to \cite{pock2016ipalm}, several properties of the functionals involved in \eqref{eq:minimization_reformulated} need to be satisfied. It is clear that $f_1$, $f_2$ and $R$ are proper, lower semi-continuous, bounded from below, and that $R$ is differentiable. Moreover, $f_1$ and $f_2$ are both convex, with $f_1$ being a linear combination of componentwise norms under a convex constraint, and $f_2$ simply being a convex constraint. We further need Lipschitz continuity of the derivative of $R$ as stated in the following proposition.

\begin{proposition}\label{prop:ipalm_main_conditions} For $R$ as above, the following holds:

\begin{enumerate}

\item \label{thrm:ipalm_main_conditions:lipschitz} For any fixed $p\in(\R^{2q+1})^N$, $c\mapsto \nabla_cR(c,p)$ is Lipschitz continuous. Similarly, for any fixed $c\in(\R^{m+2q})^N$, $p\mapsto \nabla_pR(c,p)$ is Lipschitz continuous.

\item \label{thrm:ipalm_main_conditions:lipschitz_on_bounded} $\nabla R$ is Lipschitz continuous on bounded subsets of $(\R^{m+2q})^N\times(\R^{2q+1})^N$.

\end{enumerate}

\end{proposition}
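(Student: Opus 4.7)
The plan is to exploit the fact that $R(c,p) = \|u - \conv{c}{p}\|_2^2$ is a polynomial in the joint variable $(c,p)$: since $(c,p) \mapsto \conv{c}{p}$ is bilinear, $R$ is of total degree at most $4$, and in particular smooth. Both parts then reduce to essentially standard facts, and no explicit convolution estimates are required.

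For part~\ref{thrm:ipalm_main_conditions:lipschitz}, I would fix $p \in (\R^{2q+1})^N$ and observe that the map $L_p : c \mapsto \conv{c}{p}$ is linear from $(\R^{m+2q})^N$ into $\R^m$. Hence $R(\cdot, p) = \|u - L_p\,\cdot\,\|_2^2$ is a quadratic functional, and a direct computation gives
\begin{equation*}
\nabla_c R(c,p) = 2\, L_p^\top (L_p c - u),
\end{equation*}
which is affine in $c$ with Lipschitz constant bounded by $2\|L_p\|^2$. Since every linear map between finite-dimensional spaces is bounded, this constant is finite. The argument for $p \mapsto \nabla_p R(c,p)$ is completely symmetric, using the linear operator $M_c : p \mapsto \conv{c}{p}$ and the representation $\nabla_p R(c,p) = 2\, M_c^\top (M_c p - u)$.

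For part~\ref{thrm:ipalm_main_conditions:lipschitz_on_bounded}, I would observe that since $R$ is a polynomial in $(c,p)$, so is its Hessian $\nabla^2 R$; in particular the map $(c,p) \mapsto \|\nabla^2 R(c,p)\|_{\mathrm{op}}$ is continuous. Given any bounded set $B \subset (\R^{m+2q})^N \times (\R^{2q+1})^N$, one may enclose it in a closed ball $\overline{B}$, which is compact and convex. Then $L_B := \sup_{(c,p) \in \overline{B}} \|\nabla^2 R(c,p)\|_{\mathrm{op}}$ is finite by continuity, and applying the mean value inequality to $\nabla R$ along the segment joining any two points of $B \subset \overline{B}$ yields Lipschitz continuity on $B$ with constant $L_B$.

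The main conceptual point is recognizing the polynomial (equivalently, bilinear-squared) structure of $R$, after which both items follow with little effort; the only slight obstacle is organising notation so that the bilinear form $(c,p) \mapsto \conv{c}{p}$ and the adjoint $L_p^\top$ (which one may write explicitly as a correlation against the flipped atom, if an explicit gradient is desired for the algorithmic implementation) are cleanly identified, but this is purely bookkeeping.
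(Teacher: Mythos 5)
Your proposal is correct and follows essentially the same route as the paper: linearity of $c\mapsto \conv{c}{p}$ (resp.\ $p\mapsto\conv{c}{p}$) makes $R$ quadratic in each block with an affine gradient for part~\ref{thrm:ipalm_main_conditions:lipschitz}, and boundedness of the (joint) derivative of $\nabla R$ on bounded sets plus the mean value inequality gives part~\ref{thrm:ipalm_main_conditions:lipschitz_on_bounded}. The only substantive difference is that the paper additionally writes out the block matrices representing these linear maps and estimates their norms, yielding the explicit Lipschitz constants $L_1(p)$ and $L_2(c)$ that are later needed for the iPALM stepsizes, whereas you stop at the qualitative statement.
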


\begin{proof}

Write $R(c,p) := \|\cdot \|^2_2 \circ \left[u - \cdot\right] \circ (c\ast p)$. As the map $c\mapsto c\ast p$ is linear for fixed $p$, one has $c\ast p = P(p)c$ with $P:(\R^{2q+1})^N\to \R^{m\times N(m+2q)}$. Simultaneously, for fixed $c$, $c\ast p = C(c)p$ with $C:(\R^{m+2q})^N\to \R^{m\times N(2q+1)}$. Thus,
\begin{align*}
	\nabla_c R(c,p) & = -2P(p)^T[u - c\ast p], \\
	\nabla_p R(c,p) & = -2C(c)^T[u - c\ast p].
\end{align*}
It follows that for any fixed $p\in(\R^{2q+1})^N$ and any $c$, $\tilde{c}\in(\R^{m+2q})^N$, one has
\begin{equation}\label{thrm:ipalm_main_conditions:eq:lipschitz_constant_c}
\begin{aligned}
	 \|\nabla_cR(c,p) - \nabla_cR(\tilde{c},p)\|_2
	\leq &2\|P(p)\|^2\|c - \tilde{c}\|_2 \\
	 \\ \eqqcolon &L_1(p)\|c-\tilde{c}\|_2
\end{aligned}
\end{equation}
and analogously, for all $c\in(\R^{m+2q})^N$ and all $p$, $\tilde{p}\in(\R^{2q+1})^N$,
\begin{equation}\label{thrm:ipalm_main_conditions:eq:lipschitz_constant_p}
\begin{aligned}
	\|\nabla_pR(c,p) - \nabla_pR(c,\tilde{p})\|_2
	\leq &2\|C(c)\|^2\|p - \tilde{p}\|_2 \\
	\eqqcolon &L_2(c)\|p-\tilde{p}\|_2,
\end{aligned}
\end{equation}
showing \ref{thrm:ipalm_main_conditions:lipschitz}). 

To show \ref{thrm:ipalm_main_conditions:lipschitz_on_bounded}), we note that as the maps $p\mapsto P(p)$, $p\mapsto P(p)^T$, $c\mapsto C(c)$ and $c\mapsto C(c)^T$ are linear and bounded, it is clear that $\nabla_cR(c,p)$ and $\nabla_pR(c,p)$ are differentiable with respect to $(c,p)$ jointly, with bounded derivatives on bounded subsets of $(\R^{m+2q})^N\times(\R^{2q+1})^N$. The mean value inequality immediately yields Lipschitz continuity on bounded sets.

\end{proof}

\begin{remark}
The preceding theorem ensures that Assumption A of \cite{pock2016ipalm} is satisfied (note that the lower bound in condition (iv) of this assumption can be easily satisfied by choosing sub-optimal Lipschitz constants that are bounded from below by a positive number). Further, using results from \cite{bolte2014proximal}, it is clear that the objective function in \eqref{eq:minimization_reformulated} is semi-algebraic; it thus satisfies the Kurdyka-\L{}ojasiewicz property. Hence, under suitable upper bounds on the stepsizes (see Assumption B of \cite{pock2016ipalm}), the iPALM algorithm can be applied to \eqref{eq:minimization_reformulated}, and it is guaranteed that any bounded sequence of iterates converges to a critical point of \eqref{eq:minimization_reformulated} (\!\!\cite[Theorem 4.1]{pock2016ipalm}).
\end{remark}

\begin{remark}\label{remark:lipschitz_bounds}

The Lipschitz constants from \ref{thrm:ipalm_main_conditions:lipschitz}) in Proposition \ref{prop:ipalm_main_conditions} are needed for the stepsize choice in the iPALM algorithm. Those correspond to the operator norms $\|P(p)\|$ resp. $\|C(c)\|$, which can be computed explicitly via spectral radii, but this may prove impractical for numerics. Denoting the matrix representation of $c^i\mapsto c^i*p^i$ as $P^i(p^i)$, norm estimates on the $P^i(p^i)$ can be obtained as follows: With $J:\R^{m+2q}\to\ell^2(\R)$, $(Jx)_i := x_i$ for $1\leq i\leq m+2q$ and $(Jx)_i := 0$ otherwise, with $K:\R^{2q+1}\to\ell^1(\R)$, $(Kx)_i := x_{i+2q+1}$ for $-2q\leq i\leq 0$ and $(Kx)_i := 0$ otherwise, and with $\tilde\ast$ the discrete convolution between $\ell^2(\R)$ and $\ell^1(\R)$, Young's inequality implies
\begin{align*}
	\|P^i(p^i)c^i\|_2 = \|c^i\ast p^i\|_2 \leq \|Jc^i\tilde\ast Kp^i\|_{\ell^2} \\
	\leq \|Jc^i\|_{\ell^2}\|Kp^i\|_{\ell^1} = \|c^i\|_2\|\|p^i\|_1.
\end{align*}
Taking the supremum yields $\|P^i(p^i)\|\leq\|p^i\|_1$. A similar argument shows $\|C^i(c^i)\|\leq\|c^i\|_1$ for $C^i(c^i)$ the matrix representation of $p^i\mapsto c^i*p^i$. Applying standard matrix norm estimates, one altogether has
\begin{align*}
	\|C^i(c^i)\| \leq 
	\min\Biggl\{\Biggr.
	\|c^i\|_1, 
	\sqrt{2q+1}\max_{0\leq\Delta j\leq 2q}\sum_{j=1}^mc^i_{j+\Delta j}, \\
	\sqrt{m}\max_{0\leq\Delta j\leq m-1}\sum_{j=1}^{2q+1}c^i_{j+\Delta j},
	\left(\sum_{j=1}^{m}\sum_{\Delta j=0}^{2q}|c^i_{j+\Delta j}|^2\right)^{\tfrac{1}{2}}\Biggl.\Biggr\},
\end{align*}
where it can be seen that each of these estimates may be optimal under various circumstances. Note also that although similar estimates are available for $P^i(p^i)$, its particular structure causes these to all be bounded below by $\|p^i\|_1$. Norm estimates for $P(p)$, respectively $C(c)$ are obtained using that, by Cauchy-Schwarz's inequality, for $v\in(\R^{m+2q})^N$, $\|P(p)v\|_2  \leq \sqrt{\sum_{i=1}^N\|P^i(p^i)\|^2}\|v\|_2$ and, analogously, 
$\|C(c)\| \leq \sqrt{\sum_{i=1}^N\|C^i(c^i)\|^2}$.
\end{remark}
Applied to our setting, the iPALM algorithm takes the following form: Fix a suitable non-zero initialisation $(c_{-1},p_{-1})=(c_0,p_0)\in (\R^{m+2q})^N \times (\R^{2q+1})^N$, all non-negative componentwise. Fix $\varepsilon \in(0,1)$ small, and for $i\in\{1,2\}$, choose $\bar{\alpha}_i\in(0,1-\varepsilon)$, $\bar{\beta}_i>0$ arbitrary. Define $\gamma_i := \frac{\bar{\alpha}_1 + 2\bar{\beta}_1}{2(1 - \varepsilon - \bar{\alpha}_1)}\lambda_i^+$, where $\lambda_1^+$ resp. $\lambda_2^+$ corresponds to the upper bound over all $k$ on $L_1(p_k)$ as in \eqref{thrm:ipalm_main_conditions:eq:lipschitz_constant_c}, resp. $L_2(c_k)$ as in \eqref{thrm:ipalm_main_conditions:eq:lipschitz_constant_p}; when such a bound is not analytically apparent, a viable substitute in the $k$-th iteration step is $\max_{1\leq j\leq k}L_1(p_j)$ resp. $\max_{1\leq j\leq k+1}L_2(c_j)$, possibly dropping early indices to avoid excessive size. For each $k\in\N_0$, repeat the following process:
\begin{center}
\fbox{
\begin{minipage}{0.36\textwidth}
\begin{algorithmic}
\REQUIRE Pick $\alpha_1^k\in[0,\bar{\alpha}_1]$, $\beta_1^k\in[0,\bar{\beta}_1]$.
\STATE $y_1^k := c_k + \alpha_1^k(c_k - c_{k-1})$
\STATE $z_1^k := c_k + \beta_1^k(c_k - c_{k-1})$
\STATE $\tau_1^k := \tfrac{(1+\varepsilon)\gamma_1 + (1 + \beta_1^k)L_1(p_k)}{2 - \alpha_1^k}$
\STATE $c_{k+1} \in \prox_{\tau_1^k}^{f_1}\left(y_1^k - \tfrac{1}{\tau_1^k}\nabla_cR(z_1^k,p_k)\right)$
\REQUIRE Pick $\alpha_2^k\in[0,\bar{\alpha}_2]$, $\beta_2^k\in[0,\bar{\beta}_2]$.
\STATE $y_2^k := p_k + \alpha_2^k(p_k - p_{k-1})$
\STATE $z_2^k := p_k + \beta_2^k(p_k - p_{k-1})$
\STATE $\tau_2^k := \tfrac{(1+\varepsilon)\gamma_2 + (1 + \beta_2^k)L_2(c_{k+1})}{2 - \alpha_2^k}$
\STATE $p_{k+1} \in \prox_{\tau_2^k}^{f_2}\left(y_2^k - \tfrac{1}{\tau_2^k}\nabla_pR(c_{k+1},z_2^k)\right)$
\end{algorithmic}
\end{minipage}
}
\end{center}

The mappings $\prox_{\tau_1^k}^{f_1}$ and $\prox_{\tau_2^k}^{f_2}$ above are proximal mappings (see \cite[Chapter 6]{prox}), which are given explicitly as follows.
\begin{proposition}

For $t>0$, the functions $f_1$, $f_2$ as above satisfy
\begin{align*}
	(\prox_t^{f_1}(c))^i & = \begin{cases}
	\Bigl(1 - \frac{\lambda_{\mathfrak{p}}/t}{\max\left\{\|[c^1]_+\|_2,\lambda_{\mathfrak{p}}/t\right\}}\Bigr)[c^1]_+, & i = 1, \\
	\left[c^i - \tfrac{\lambda_{\mathfrak{a}}}{t}\right]_+, & i > 1,
	\end{cases} \\
	(\prox_t^{f_2}(p))^i & = \frac{[p^i]_+}{\max\left\{\|[p^i]_+\|_2,1\right\}},
\end{align*}
where $[\cdot]_+$, $|\cdot|$, $-$ and $\sign$ are understood componentwise.

\end{proposition}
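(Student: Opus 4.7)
The starting point is the definition $\prox_t^f(x) = \argmin_y\{f(y) + \tfrac{t}{2}\|y - x\|_2^2\}$ together with the observation that both $f_1$ and $f_2$ are separable in the channel index $i$. Consequently, each prox computation decouples into $N$ independent subproblems, one per channel, each carrying its own non-negativity constraint.

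For the easy cases I would argue directly. For $f_1$ with $i \geq 2$, the $\ell_1$-norm restricted to the positive orthant coincides with $\sum_j y_j$, so the subproblem separates further into one-dimensional minimizations $\min_{y_j \geq 0}\lambda_{\mathfrak{a}} y_j + \tfrac{t}{2}(y_j - c_j^i)^2$, which give the stated soft-thresholding $[c_j^i - \lambda_{\mathfrak{a}}/t]_+$. For $f_2$ I would use the orthogonal decomposition $p^i = [p^i]_+ - [p^i]_-$ (disjoint supports, both non-negative) to derive, for any $y \geq 0$,
\begin{equation*}
	\|y - p^i\|_2^2 = \|y - [p^i]_+\|_2^2 + 2\langle y, [p^i]_-\rangle + \|[p^i]_-\|_2^2,
\end{equation*}
with non-negative cross term. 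This forces the minimizer to be supported in $\mathrm{supp}([p^i]_+)$, reducing the task to the ordinary projection of $[p^i]_+$ onto the unit $\ell_2$-ball, which yields the stated formula and is automatically non-negative.

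The main obstacle is the $i = 1$ case of $f_1$, since $\lambda_{\mathfrak{p}}\|\cdot\|_2$ is not separable across components. Applying the same decomposition identity (now with $c^1$ in place of $p^i$) reduces the constrained problem to the unconstrained prox of $\lambda_{\mathfrak{p}}\|\cdot\|_2$ at $[c^1]_+$. I would then analyze this via the subdifferential of $\|\cdot\|_2$: the candidate $y = 0$ is optimal iff $t[c^1]_+ \in \partial(\lambda_{\mathfrak{p}}\|\cdot\|_2)(0) = \overline{B_{\lambda_{\mathfrak{p}}}(0)}$, i.e.\ $\|[c^1]_+\|_2 \leq \lambda_{\mathfrak{p}}/t$; otherwise the first-order condition $\lambda_{\mathfrak{p}} y / \|y\|_2 + t(y - [c^1]_+) = 0$ forces $y = \alpha[c^1]_+$ for some scalar $\alpha > 0$, and solving the resulting one-dimensional equation gives $\alpha = 1 - \lambda_{\mathfrak{p}}/(t\|[c^1]_+\|_2)$. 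Merging the two regimes through the $\max$ produces the stated formula. The delicate point, and what I expect to require the most care, is verifying that the unconstrained minimizer is always a non-negative multiple of $[c^1]_+$, which is precisely what justifies the reduction from the constrained to the unconstrained prox in the first place.
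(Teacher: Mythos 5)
Your proof is correct, and its overall architecture coincides with the paper's: exploit separability across the channel index, eliminate the non-negativity constraints by passing from $c^1$ resp.\ $p^i$ to their positive parts, and then solve the resulting unconstrained (or sign-free) problems. The difference is in how the individual pieces are established. The paper outsources every closed-form computation to textbook results (block soft-thresholding for $\lambda_{\mathfrak p}\|\cdot\|_2$, scalar soft-thresholding combined with a lemma for the sum with $\delta_+$, and, for $f_2$, a theorem on projections onto sublevel sets applied to $g_i(x)=\delta_+(x)+\|x\|_2^2$, which requires solving a Lagrange-multiplier equation for $\lambda$). You instead derive everything from first principles: the one-dimensional minimization for $i\geq 2$, the subdifferential/first-order analysis for the $\|\cdot\|_2$-prox, and --- most notably --- the orthogonal decomposition $\|y-p^i\|_2^2=\|y-[p^i]_+\|_2^2+2\langle y,[p^i]_-\rangle+\|[p^i]_-\|_2^2$ for $y\geq 0$, which reduces the projection onto $\{x\geq 0,\ \|x\|_2\leq 1\}$ directly to the ordinary ball projection of $[p^i]_+$. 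Your route for $f_2$ is more elementary and arguably cleaner than the paper's sublevel-set argument; the paper's route is shorter on the page because it leans on citations. You also correctly flag the one point that genuinely needs care, namely that dropping the constraint for $i=1$ is only legitimate because the unconstrained minimizer turns out to be a non-negative multiple of $[c^1]_+$ and is therefore feasible --- this is exactly the observation the paper makes (``any solution of the above problem will be non-negative even without the non-negativity constraint''). One small point worth spelling out in your $f_2$ and $i=1$ reductions: besides killing the cross term, you should note explicitly that zeroing the components of the candidate minimizer outside $\mathrm{supp}([p^i]_+)$ (resp.\ $\mathrm{supp}([c^1]_+)$) cannot increase the remaining quadratic term and preserves feasibility, which is what lets you assume the minimizer is supported there; this is immediate but is the step doing the actual work.
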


\begin{proof}

Denote for any set $E$ by $\delta_E$ the indicator function satisfying $\delta_E(x) = 0$ for $x\in E$ and $\delta_E(x) = \infty$ for $x\notin E$, and by $\delta_+$ the indicators for the positive cones. $P_E$ denotes projection onto $E$. 

We begin by noting that \cite[Theorem 6.6]{prox}, and the fact that $f_1$ and $f_2$ can both be written as componentwise sums enables us to compute each proximal mapping componentwise, with the $i$-th component depending only on $c^i$ resp. $p^i$. 

\underline{$(\prox_t^{f_1}(c))^1$:} Writing it out explicitly, one has 
\begin{equation*}
\begin{aligned}
&(\prox^{f_1}_t(c))^1=\\
& \argmin_{d\in\R^{m+2q}}
 \sqrt{\sum_{i=1}^{m+2q}d_i^2} + \sum_{i=1}^{m+2q} \delta_+(d_i) + \frac{t}{2}|d_i-c_i^1|^2 .
\end{aligned}
\end{equation*}
From this, it is apparent that any solution $\hat{d}\in \R^{m+2q}$ will satisfy $\hat{d}_i = 0$ if $c^1_i \leq 0$. As such, one can replace $c^1$ by $[c^1]_+$ in the above minimization problem without changing the solution. But with $[c^1]_+$ instead of $c^1$, it is clear that any solution of the above problem will be non-negative even without the non-negativity constraint, making it equivalent to 
\begin{equation*}
\begin{aligned}
&\prox^{\lambda_{\mathfrak{p}}\|\cdot\|_2}_t([c^1]_+) =\\
&\argmin_{d\in\R^{m+2q}}
 \sqrt{\sum_{i=1}^{m+2q}d_i^2} + \frac{t}{2} \sum_{i=1}^{m+2q}   |d_i-([c^1]_+)_i|^2.
\end{aligned}
\end{equation*}
By \cite[Example 6.19]{prox}, the solution of the latter is of the claimed form.

\underline{$(\prox_t^{f_1}(c))^i$ for $i\geq 2$:} Writing $f_1(c)^i_j = |c^i_j| + \delta_+(c^i_j)$ for $1\leq j\leq N$, a combination of \cite[Example 6.8]{prox}  with \cite[Lemma 6.14(b)]{hollerreview}  immediately yields the claimed form.

\underline{$\prox_t^{f_2}(p)$:} Observe that $f_2(p) = \sum_{i=1}^N\delta_{C_i}(p^i)$, where $C_i := \left\{x\in\R^{2q+1} \mid x\geq 0, \, \|x\|_2\leq 1\right\}$. Employing \cite[Theorems 6.6, 6.24 and 6.30]{prox}, as well as noting that $C_i = \left\{x\in\R^{2q+1} \mid g_i(x) \leq 1\right\}$, where $g_i(x) := \delta_+(x) + \|x\|_2^2$ with effective domain $\R^{2q+1}_+$, one has 
\begin{equation*}
(\prox_t^{f_2}(p))^i = P_{C_i}(p^i) = \begin{cases}
[p^i]_+, & g_i([p^i]_+) \leq 1, \\
\prox_{\lambda g_i}(p^i), & \text{else,}
\end{cases}
\end{equation*}
where $\lambda>0$ solves 
\begin{align*}
1 & = g_i(\prox_{\lambda g_i}(p^i)) = g_i\bigl(\prox_{\tfrac{1}{2\lambda+1}\delta_+}(\frac{p^i}{2\lambda+1})\bigr) \\
& = g_i\Bigl(\frac{[p^i]_+}{2\lambda+1}\Bigr) = \frac{1}{(2\lambda+1)^2}\|[p^i]_+\|_2^2,
\end{align*}
with the last three equalities following from \cite[Theorems 6.13, 6.24 and Lemma 6.26]{prox}. Simplifying leads to the desired form.
\end{proof}

\section{Experiments}
In this section, we provide experimental results of the proposed method (CSC) for two different scenarios, namely activity prediction, where we try to detect whenever energy is consumed actively by a person, and multichannel disaggregation, where we aim to reconstruct all individual device signals that amount to an aggregate energy signal.
In particular, \Cref{table:unsupervised_activity} in \Cref{sec:exp_activity}, and \Cref{sec:exp_multi} contain results for the setting that no ground truth training data is available for the household of interest, constituting a realistic use case as explained in \Cref{sec:intro}.

\subsection{Data}

For our experiments, we employ the SynD data set generator \cite{klemenjak2020synthetic}, an algorithm to generate synthetic energy signals. This enables us to work with clean, yet realistic, data. The algorithm uses real device energy signatures, which it randomly alters and places throughout the day. It became apparent, however, that the fridge's energy signal generated with SynD is unrealistically regular, being identical each day; for this reason, we replaced the fridge by an energy signal taken from the UK-DALE 6 seconds data set \cite{UK-DALE}, and resampled to the desired sampling rate by weighted averaging. 

Explicitly, we generate two different synthetic households, referred to as synd1 and synd2. Each of the two data sets consists of four devices, synd1 of the fridge from UK-DALE's house 1 as well as SynD's electric space heater, washing machine, and hair dryer, and synd2 of the fridge from UK-DALE's house 2 and SynD's dishwasher, iron, and watercooker. We use a sampling rate of 1 minute, and the data sets are normalized to have values between 0 and 1.

\subsection{Performance metrics}

\paragraph{Binary perfomance metrics}
Both experiments -- activity prediction and multichannel disaggregation -- can be cast as binary classification tasks with multiple channels. At each time point each of the channels to predict has to be classified as either ON or OFF. In case of activity prediction, for each time point we aim to classify the \emph{active} signal as ON or OFF depending on whether a person is actively consuming energy or not, and similarly classify the \emph{passive} signal as ON or OFF depending on whether a \emph{passive device} is consuming energy or not. In case of multichannel disaggregation, we aim to classify each individual device as ON or OFF. Note that the proposed method, as well as the comparison methods, yield non-binary output signals, namely the actively consumed energy or the individual device energy signatures, rather than a binary ON/OFF signal. Thus, before evaluating classification performance, non-binary predictions are first transformed to binary signals via pointwise thresholding by 0.01 times the maximum value of the ground truth signal. Given $\hat{u},u\in\R^m$ a true non-binary signal and its prediction, respectively, we compute $\hat{U}$ and $U$ as $\hat{U} = (\hat{u}\geq 0.01 \|\hat{u}\|_\infty)$ and $U = (u\geq 0.01 \|\hat{u}\|_\infty)$ in Python notation. As a metric for evaluating binary classification, we use a modified Matthews correlation coefficient (MCC) and F1 score. Given $\hat{U},U\in \{0,1\}^m$ a true signal and its prediction, respectively, the MCC and F1 are defined as %
\begin{equation}\label{eq:mcc}
\begin{aligned}
\text{MCC}(\hat{U},U)\coloneqq\frac{(\text{TP})(\text{TN}) - (\text{FP})(\text{FN})}{ \sqrt{(\text{TP} + \text{FP})  (\text{TP} + \text{FN})  (\text{TN} + \text{FP})  (\text{TN} + \text{FN})}}
\end{aligned}
\end{equation}
and
\begin{equation}\label{eq:f1}
\begin{aligned}
\text{F1}(\hat{U},U)\coloneqq\frac{2\text{TP}}{2\text{TP}+\text{FP}+\text{FN}}
\end{aligned}
\end{equation}
where $\text{TP}=\hat{U} \cdot U$, $\text{TN}=(1-\hat{U})\cdot(1-U)$, $\text{FP}=(1-\hat{U})\cdot U$, and $\text{FN}=\hat{U}\cdot (1-U)$, with $\cdot$ the Euclidean scalar product, denote the number of true positives, true negatives, false positives, and false negatives, respectively. The MCC score is a value in $[-1,1]$, where $+1$ and $-1$ correspond to perfect agreement or disagreement, respectively, and $0$ indicates no relationship. The F1 score is a value in $[0,1]$, 1 being the best score. In view of relevant monitoring applications, where it is sufficient to predict activity up to a reasonable inaccuracy in time, we introduce an \emph{activity prediction radius} $R\in \mathbb{N}_0$ into the computation of the scores. We first define a modified true activity $\tilde{U}$ via $\tilde{U}_i = 1 $ if there exists $j\in \{1,2,\dots,m\}, |i-j|\leq R$ such that $\hat{U}_j=1$ and $\tilde{U}_i = 0 $ otherwise.
Hence, $\tilde{U}_i$ is equal to one whenever there was true activity within a $(2R+1)$ minute time window around $i$. We then compute $\text{TP} = \tilde{U}\cdot U$, $\text{TN} = (1-\hat{U})\cdot (1-U)$, $\text{FP} = (1-\tilde{U})\cdot U$, and $\text{FN} = \hat{U}\cdot (1-U)$. Using these values, the modified MCC and F1 are obtained via \eqref{eq:mcc} and \eqref{eq:f1} and denoted as $\text{MCC}_R$ and $\text{F1}_R$, respectively. In our experiments, we use $R=7$.

While both metrics are presented going forward, we generally place additional emphasis on the MCC, as it is more robust. Whereas the F1 score may falsely assign near-perfect scores by, e.g., classifying every instance as positive in a data set containing significantly less negatives than positives, the MCC inherently adjusts for such data set imbalance, see \cite{chiccojurman2020mcc}.

\paragraph{Non-binary performance metrics}
We do not only want to quantify whether the state of a device was correctly predicted, but also how well a method predicts the magnitude of its energy consumption. Thus, we also employ two non-binary metrics to evaluate performance, namely the normalized mean squared error (NMSE) and the normalized maximum absolute error (NMAE), defined as

\[\text{NMSE}(\hat{u},u) = \frac{\|\hat{u} - u\|_2 ^2}{\|\hat{u}\|_2^2}, \, 
\text{NMAE}(\hat{u},u) = \frac{\max_i |\hat{u}_i - u_i|}{\max_i |\hat{u}_i|}.\]

\subsection{Experimental setup}

The CSC experiments were carried out in the following manner: Given an aggregate energy signal $u\in\R^m$, initialize $(c_{-1},p_{-1}) = (c_0,p_0)\in(\R^{m+2q})^N\times(\R^{2q+1})^N$ by, e.g., selecting random non-zero features of $u$ of appropriate length as atoms and their location, shifted appropriately, as coefficients. Pick $\alpha_1^k=\alpha_2^k=\beta_1^k=\beta_2^k=0.7$ for all $k\in\N_0$, and compute the $\tau_1^k$ and $\tau_2^k$ in accordance with \cite{pock2016ipalm}, Remark 4.1. Employ the relative change  in the modified energy 
\begin{equation*}
	\Psi_k := F(c_k,p_k) + \frac{\gamma_1}{2}\|c_k-c_{k-1}\|_2^2 + \frac{\gamma_2}{2}\|p_k - p_{k-1}\|_2^2
\end{equation*}
(see \cite{pock2016ipalm}, (4.6)) as the stopping rule; explicitly, we stop when $\tfrac{|\Psi_k-\Psi_{k-1}|}{\Psi_{k-1}} < 10^{-6}$ or when $\Psi_k=0$. Across all experiments, the method terminates after around 3000-6000 iterations. One iteration takes roughly 0.55sec on an AMD Ryzen 7 3800X 8-Core Processor CPU.

\subsection{Results}
\subsubsection{Activity prediction}\label{sec:exp_activity}
In our data sets, the passive consumption is comprised solely of the fridge signal, while the active consumption consists of all remaining devices. We compare to four different supervised methods from the non-intrusive load monitoring toolkit NILMTK \cite{batra2014nilmtk,batra2019towards}, namely, combinatorial optimisation (CO), a factorial hidden Markov model (FHMM), a neural network model (S2S), and discriminative sparse coding (DSC), and to the fully unsupervised graph signal processing method (GSP) \cite{zhao2016training}, for which a Python implementation is available at \cite{gsp_git}. All methods are used to predict the active and the passive energy consumption. The data is normalized before plotting and computing the performance metrics. We report the scores on the active component of the signal in \Cref{table:supervised_activity,table:unsupervised_activity}. In \Cref{table:supervised_activity}, we show the testing scores for the case of training and testing on the same household. That is, all trainable algorithms use four weeks of the energy signal for supervised training, and a different selection of four weeks of the same energy signal for testing. In this setting, we observe that the proposed method performs comparably to the deep learning method S2S, and that they both outperform the remaining methods. For the scores in \Cref{table:unsupervised_activity}, on the other hand, we use 4 weeks of data of different households for training and testing. This corresponds to the use case, that no training data of the household of interest is available and, thus, training is conducted using accessible data of a different house, which leads to a distribution shift between training and testing data, especially since synd1 and synd2 even contain different devices. Empirically, the proposed method on average outperforms its (supervised) rivals in this setting.
While CSC also uses data to determine hyperparameters, it uses significantly less information for this process than the other methods, and is thus less prone to overfitting. That is, it is not necessary for CSC to learn, e.g., the distribution of individual devices (FHMM), the shapes of device signatures (DSC), or the parameters of a neural network etc., but solely to determine the hyperparameters $\lambda_\mathfrak{a}$, $\lambda_\mathfrak{p}$, $N$ and $q$ of our energy functional. We believe that this is the reason for CSC generalizing better than its supervised rivals to unseen data with a potentially slightly different distribution. In general, in \Cref{table:supervised_activity,table:unsupervised_activity} we observe that S2S yields the best MCC/F1 scores and CSC performs superior in terms of NMSE/NMAE scores. We believe that this relates to the fact that CSC recognizes the specific shapes of energy consumption more accurately. This is also supported by \Cref{fig:activity}. While FHMM and CO also closely fit the ground truth passive energy signal, CSC is the only method to capture the spiked shape of the fridge signal. The deficiency in the MCC score is a result of the reconstructed signal not being close enough to zero at times where the ground truth is zero. It should also be noted that the second unsupervised method, GSP, performs very well on average, with only the reconstruction of the passive signal failing severely in \Cref{fig:activity}, which is most likely caused by the method misinterpreting starting and ending points of events.

 \begin{table}[h]
\if\arxive1
\scriptsize
\fi
\centering
\begin{tabular}{|m{0.02\textwidth}m{0.05\textwidth}|m{0.03\textwidth}|m{0.03\textwidth}|m{0.05\textwidth}|m{0.03\textwidth}|m{0.03\textwidth}|m{0.03\textwidth}|}
\hline
 & & CSC	& GSP & FHMM	 & CO	& S2S	 & DSC 	\\
 \hline
synd1& $\text{MCC}_7$ 	 & 0.78 & 0.84 & 0.79 & 0.91 & \textbf{0.97} & 0.44\\
 &  	 $\text{F1}_7$ 	 & 0.78 & 0.84 & 0.81 & 0.92 & \textbf{0.97} & 0.47\\
 & 	 NMSE 	 & \textbf{0.01} & 0.07 & 0.12 & 0.09 & 0.04 & 0.36\\
 & 	 NMAE 	 & \textbf{0.13} & 0.96 & 0.38 & 0.30 & 0.79 & 0.95\\
synd2& $\text{MCC}_7$ 	 & 0.95 & 0.88 & 0.87 & 0.90 & \textbf{0.98} & 0.73\\
 &  	 $\text{F1}_7$ 	 & 0.95 & 0.88 & 0.86 & 0.91 & \textbf{0.98} & 0.72\\
 & 	 NMSE 	 & \textbf{0.002} & 0.11 & 0.14 & 0.11 & 0.05 & 0.21\\
 & 	 NMAE 	 & \textbf{0.17} & 0.63 & 0.66 & 0.57 & 0.53 & 0.92\\

\hline
\end{tabular}
\caption{Testing scores of the active energy usage for different methods applied to synd1 and synd2. Training and testing on different time frames of the same household. Bold indicates best result.}
\label{table:supervised_activity}
\end{table}

\begin{table}[h]
\if\arxive1
\scriptsize
\fi
\centering
\begin{tabular}{|m{0.02\textwidth}m{0.05\textwidth}|m{0.03\textwidth}|m{0.03\textwidth}|m{0.05\textwidth}|m{0.03\textwidth}|m{0.03\textwidth}|m{0.03\textwidth}|}
\hline
 & & CSC	& GSP & FHMM	 & CO	& S2S	 & DSC	\\
 \hline

synd1& $\text{MCC}_7$ 	 & 0.63 & 0.84 & 0.82 & \textbf{0.85} & 0.52 & 0.38\\
 &  	 $\text{F1}_7$ 	 & 0.63 & 0.84 & 0.82 & \textbf{0.87} & 0.54 & 0.40\\
 & 	 NMSE 	 & \textbf{0.001} & 0.07 & 0.04 & 0.16 & 0.06 & 0.18\\
 & 	 NMAE 	 & \textbf{0.30} & 0.96 & 0.32 & 0.44 & 0.75 & 0.97\\
synd2& $\text{MCC}_7$ 	 & \textbf{0.94} & 0.88 & 0.92 & 0.92 & \textbf{0.94} & 0.48\\
 &  	 $\text{F1}_7$ 	 & 0.94 & 0.88 & 0.92 & 0.92 & \textbf{0.95} & 0.47\\
 & 	 NMSE 	 & \textbf{0.003} & 0.11 & 0.11 & 0.11 & 0.08 & 0.52\\
 & 	 NMAE 	 & \textbf{0.06} & 0.63 & 0.57 & 0.61 & 0.56 & 0.99\\

\hline
\end{tabular}
\caption{Testing scores of the active energy usage for different methods applied to synd1 resp. synd2 with training on synd2 resp. synd1. Bold indicates best result.}
\label{table:unsupervised_activity}
\end{table}

\newcommand{\data}{synd2}
\newcommand{\w}{0.25}
\begin{figure}
\centering
\includegraphics[width=\w\textwidth]{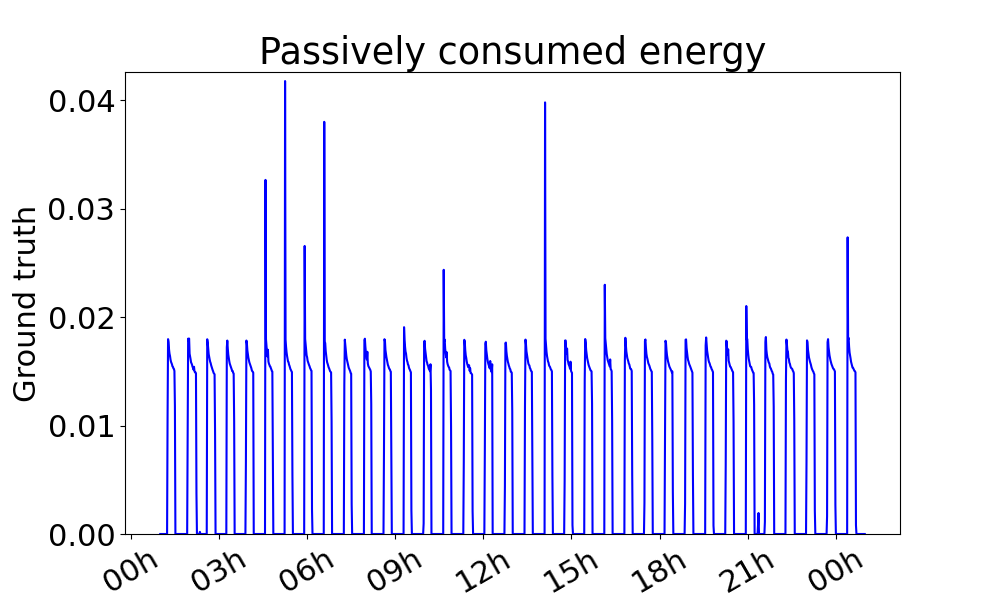}%
\includegraphics[width=\w\textwidth]{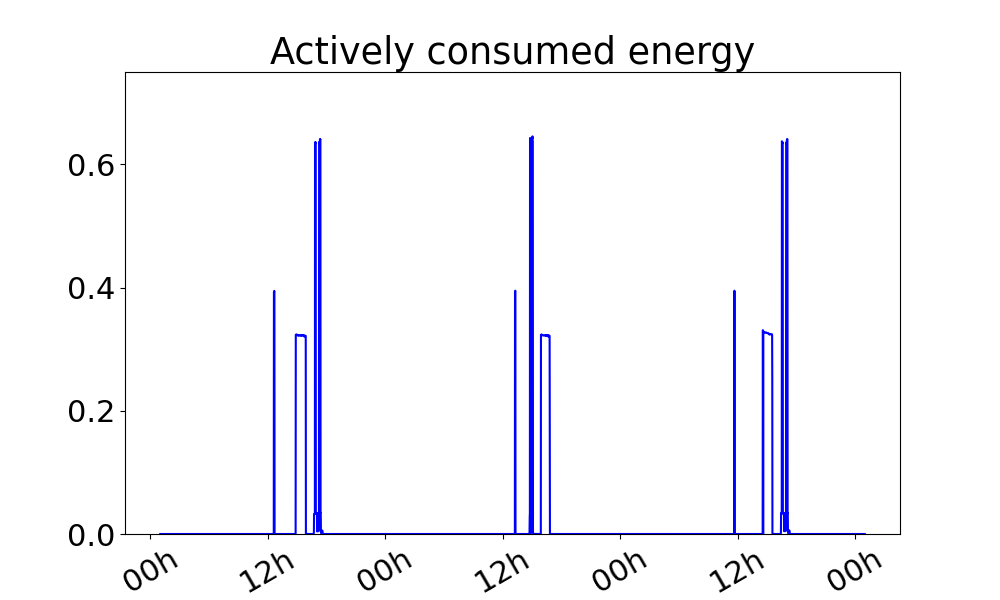}

\includegraphics[width=\w\textwidth]{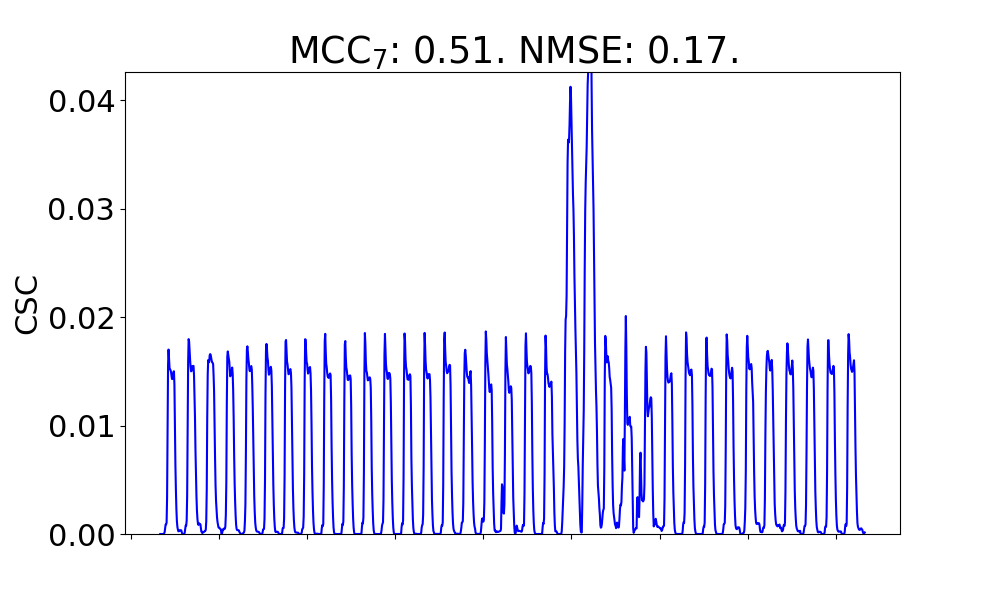}%
\includegraphics[width=\w\textwidth]{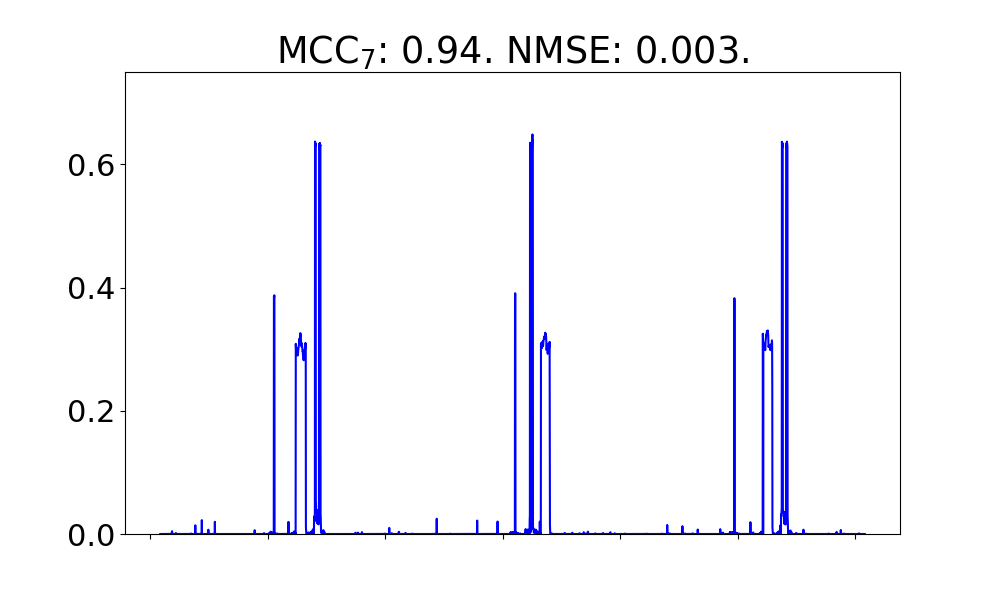}

\includegraphics[width=\w\textwidth]{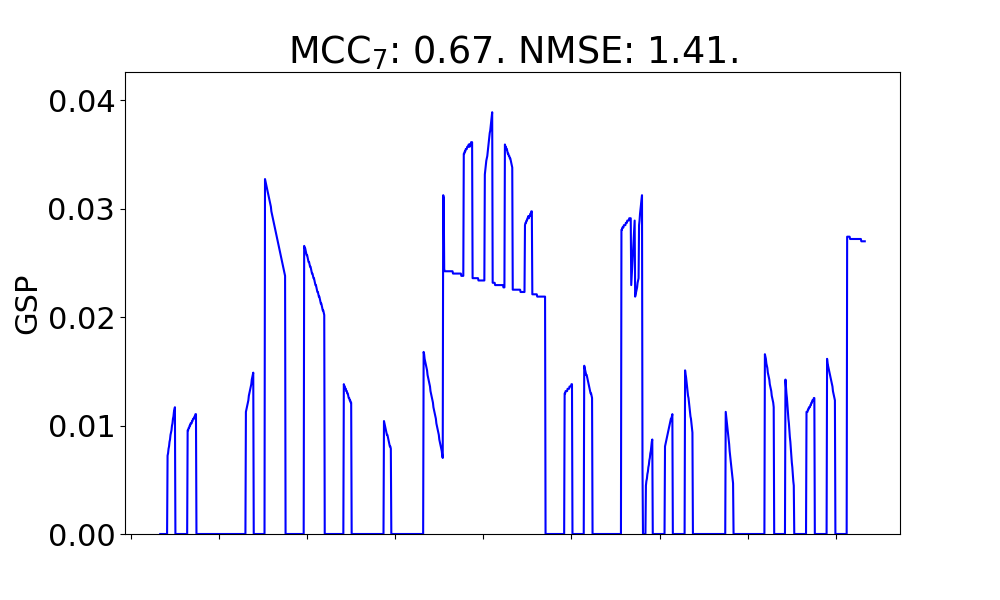}%
\includegraphics[width=\w\textwidth]{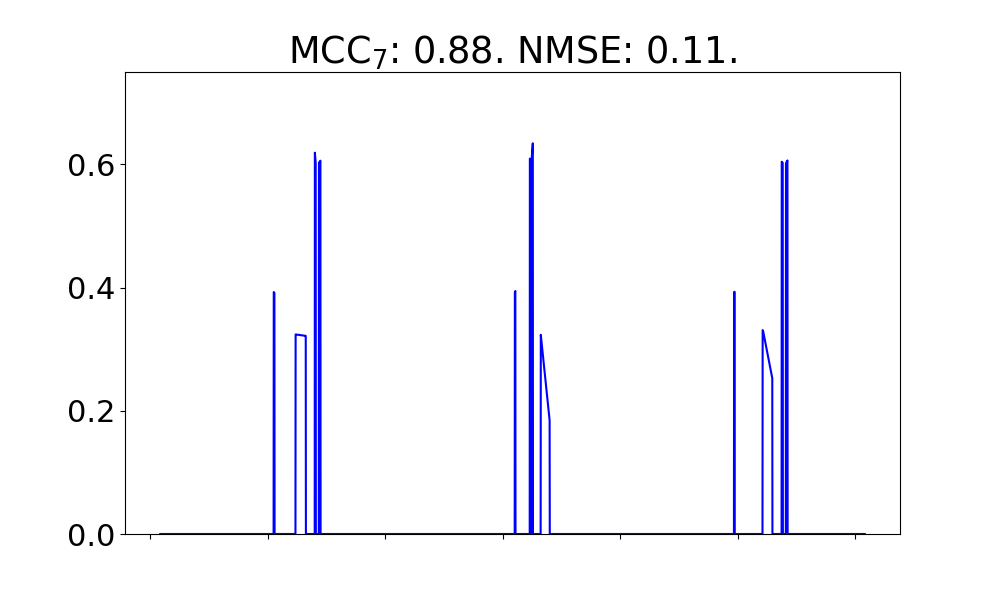}

\includegraphics[width=\w\textwidth]{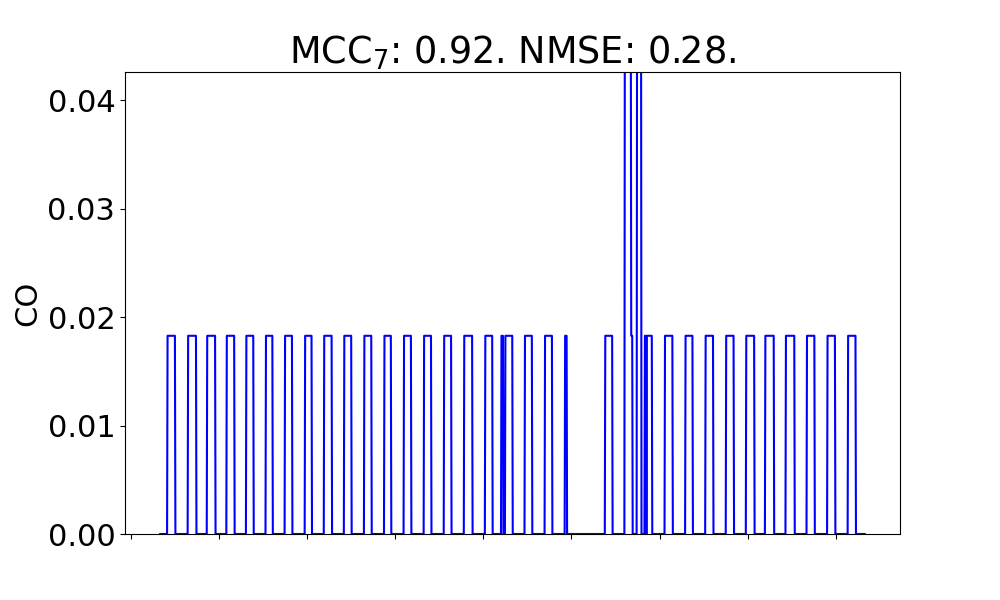}%
\includegraphics[width=\w\textwidth]{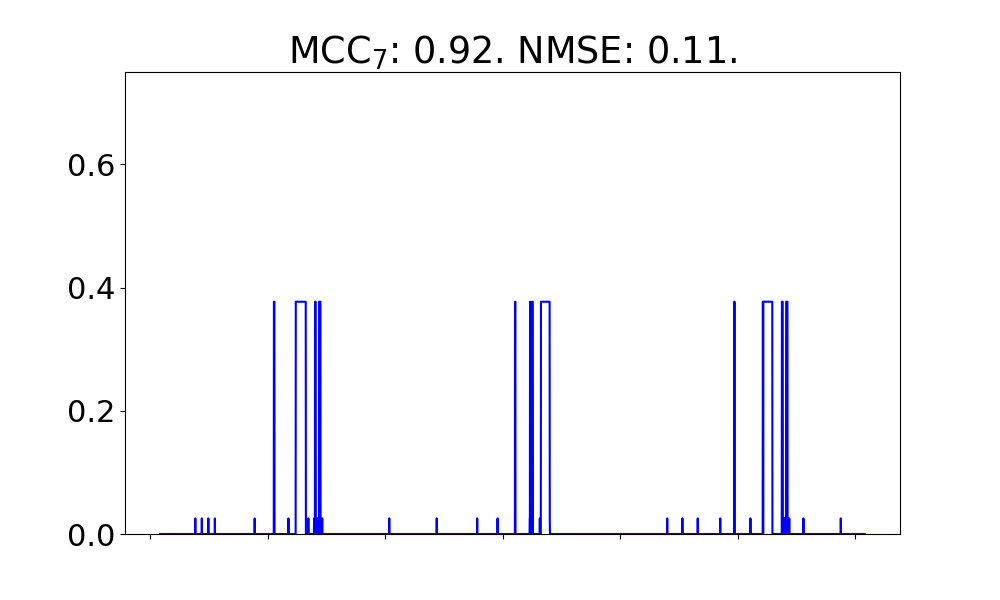}

\includegraphics[width=\w\textwidth]{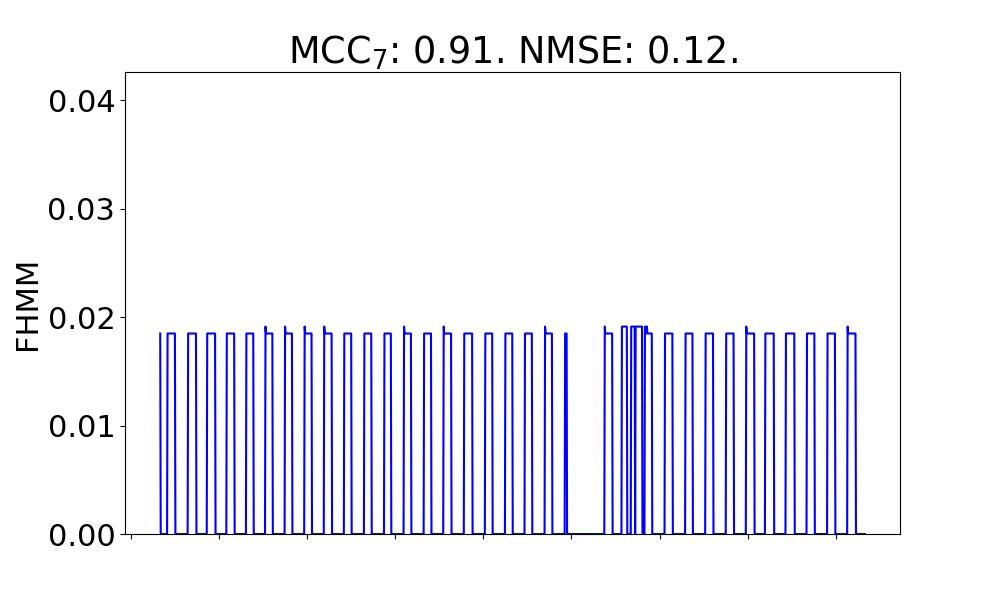}%
\includegraphics[width=\w\textwidth]{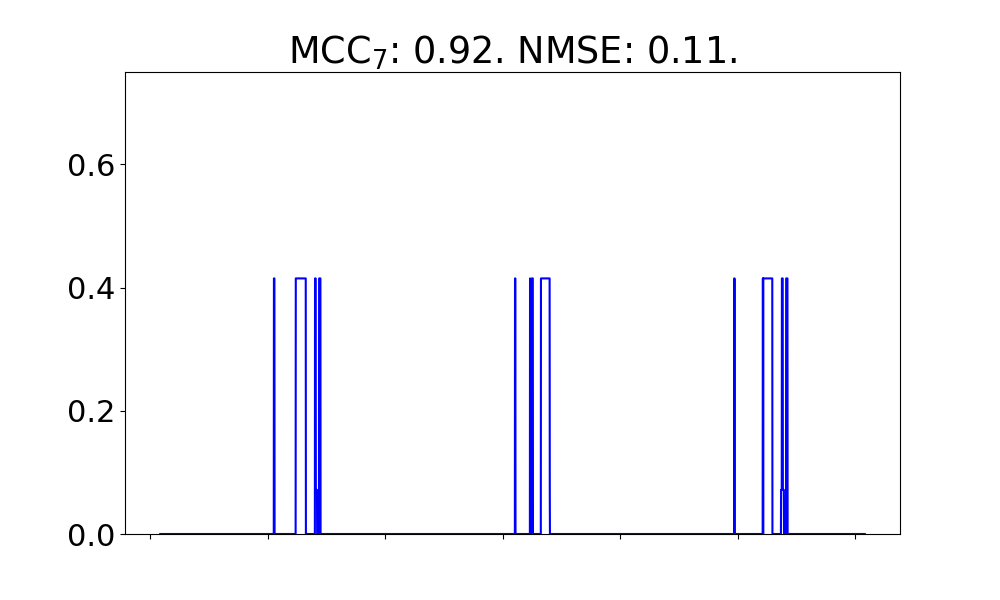}

\includegraphics[width=\w\textwidth]{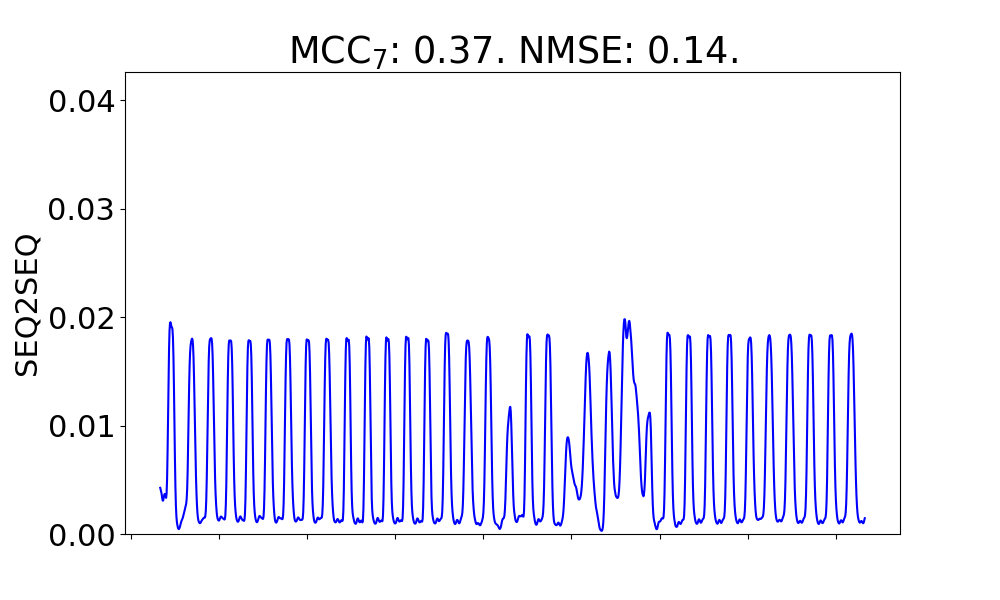}%
\includegraphics[width=\w\textwidth]{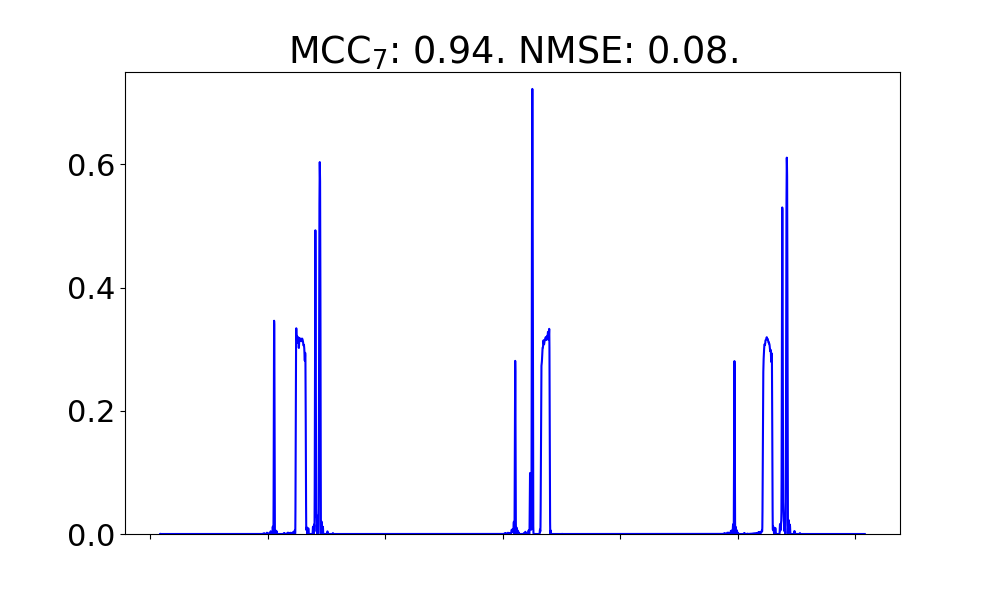}

\includegraphics[width=\w\textwidth]{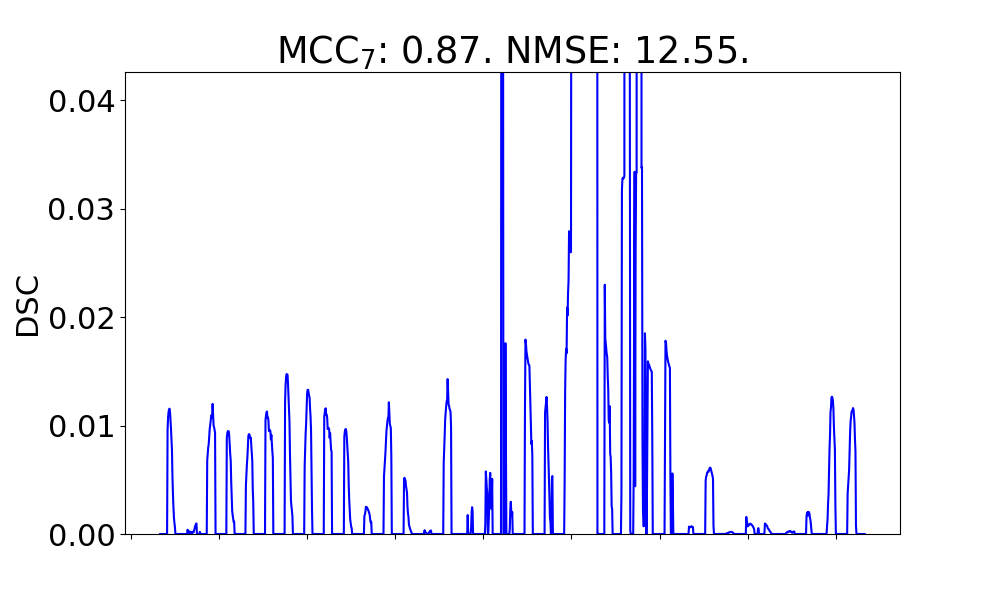}%
\includegraphics[width=\w\textwidth]{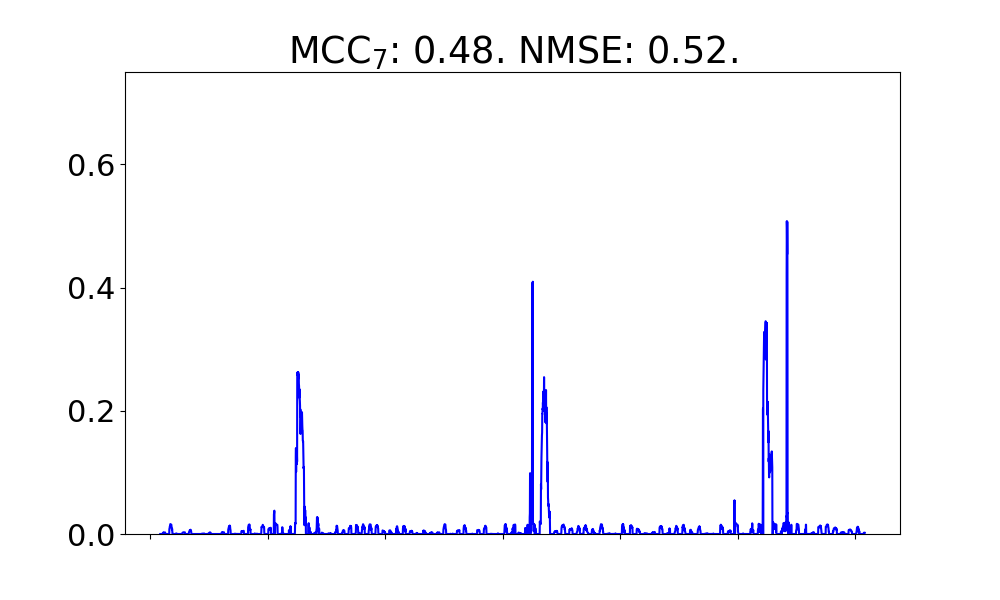}
\caption{Predictions for active and passive energy usage with different methods. Results for inference on synd2 with training on synd1. Left column: passive energy. Right column: active energy. Top row to bottom row: ground truth, predictions with CSC, GSP, CO, FHMM, S2S, DSC.}
\label{fig:activity}
\end{figure}

\subsubsection{Unsupervised multichannel disaggregation}\label{sec:exp_multi}
In this experiment, we investigate the performance of CSC and GSP applied to multichannel disaggregation, that is, inferring all individual devices from the given aggregate energy signal. We consider again the use case of no ground truth training data being available for the household of interest and estimate the CSC hyperparameters on a different dataset than the one used for testing. We cannot perform this experiment with the remaining methods, as those learn characteristics of specific devices, however, here the devices contained in the training and testing data differ. Therefore, this use case constitutes a setting, where only unsupervised methods are feasible. 
Quantitative results can be found in \Cref{table:unsupervised_multichannel}, and qualitative ones in \Cref{fig:multichannel}. Overall, CSC slightly outperforms GSP in terms of quantitative scores. In general, the reconstruction quality of active devices with CSC is dependent on the specific device signature. In \Cref{fig:multichannel}, we observe that devices with a more distinct and/or extended signature, such as the space heater or the iron, are reconstructed properly. The reconstruction is less precise for devices like the watercooker, with a signature composed of simple peaks of a very short duration.

\begin{table}[h]
\if\arxive1
\scriptsize
\fi
\centering
\begin{tabular}{|m{0.04\textwidth}|m{0.05\textwidth}|m{0.05\textwidth}|m{0.07\textwidth}|m{0.05\textwidth}|m{0.07\textwidth}|}
\hline
&	synd1	&Fridge	&Dishwasher	&Iron	&Watercooker\\
\hline
CSC &	$\text{MCC}_7$&\textbf{0.48}&	\textbf{0.74}&	0.88&	0.32\\
 &	$\text{F1}_7$&	\textbf{0.74}&	\textbf{0.73}&	0.88&	0.27\\
 &	NMSE&	\textbf{0.24}&	\textbf{0.13}&	\textbf{0.04}&	0.96\\
 &	NMAE&	\textbf{1.53}&	1.00&	\textbf{0.99}&	1.98\\
GSP&	$\text{MCC}_7$&	0.47&	0.55&	\textbf{0.96}&	\textbf{0.87}\\
 &	$\text{F1}_7$&	0.73&	0.47&	\textbf{0.96}&	\textbf{0.86}\\
 &	NMSE&	1.41&	0.14&	0.06&	\textbf{0.30}\\
 &	NMAE&	1.62&	\textbf{0.94}&	1.05&	\textbf{1.02}\\

\hline
\hline
&	synd2	&Fridge	&Hair dryer	&Space heater	&Washing machine\\
\hline
CSC&	$\text{MCC}_7$	&	0.48				&	\textbf{0.35}		&	0.74				&	\textbf{0.46}\\
 &	$\text{F1}_7$	&	0.73				&	\textbf{0.24}		&	0.72				&	\textbf{0.47}\\
 &	NMSE	&	\textbf{0.25}		&	\textbf{0.55}		&	\textbf{0.04}		&	\textbf{0.21}\\
 &	NMAE	&	1.12				&	\textbf{1.00}		&	\textbf{0.94}		&	\textbf{0.92}\\

GSP&	$\text{MCC}_7$	&	\textbf{0.66}		&	0.30				&	\textbf{0.77}	&	0.11\\
 &	$\text{F1}_7$	&	\textbf{0.80}		&	0.17				&	\textbf{0.76}	&	0.17\\
 &	NMSE	&	0.48				&	11.57			&	0.67				&	1.20\\
 &	NMAE	&	\textbf{1.00}		&	1.16				&	1.00				&	1.00\\

\hline
\end{tabular}
\caption{Testing scores of individual device signatures. Training CSC was done on the respective other household than testing. Bold indicates the better result.}
\label{table:unsupervised_multichannel}
\end{table}

\newcommand{\datat}{synd2}
\newcommand{\datao}{synd1}
\renewcommand{\w}{0.25}

\begin{figure}[t]
\centering
\subfloat[Results on synd1]{
\centering
\begin{tabular}[t]{c}%
\includegraphics[width=\w\textwidth]{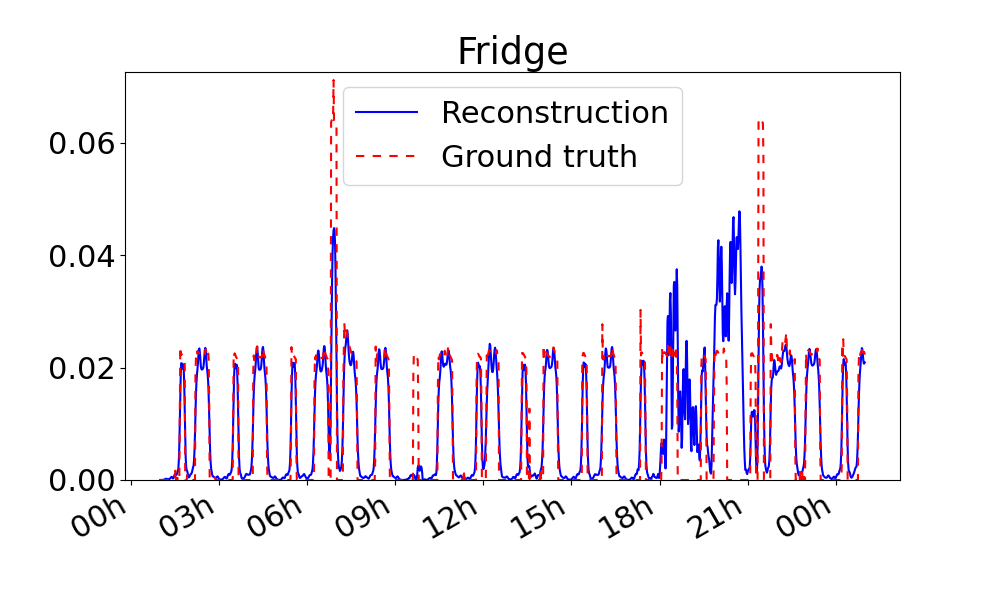}%
\includegraphics[width=\w\textwidth]{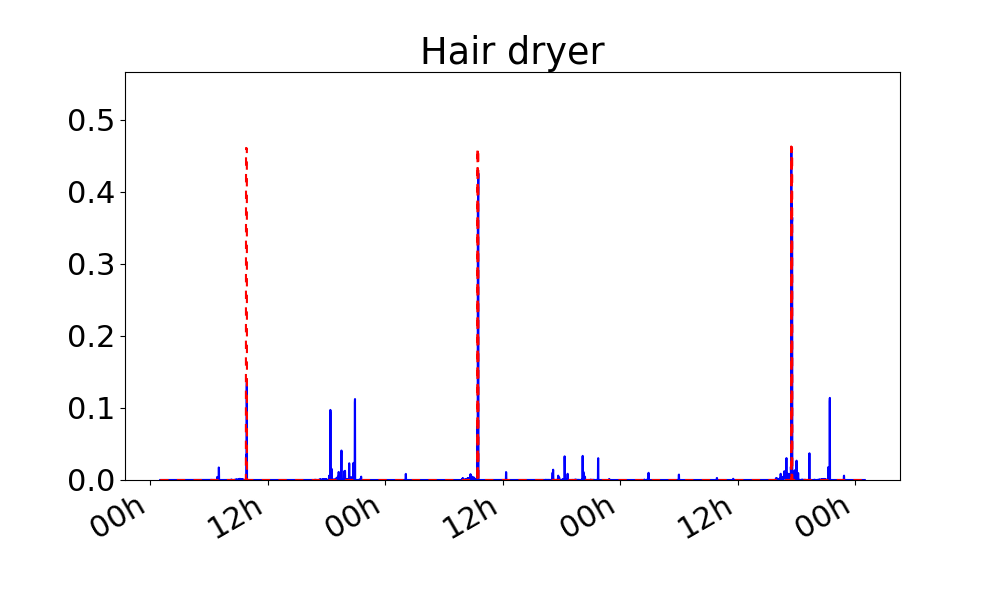}\\
\includegraphics[width=\w\textwidth]{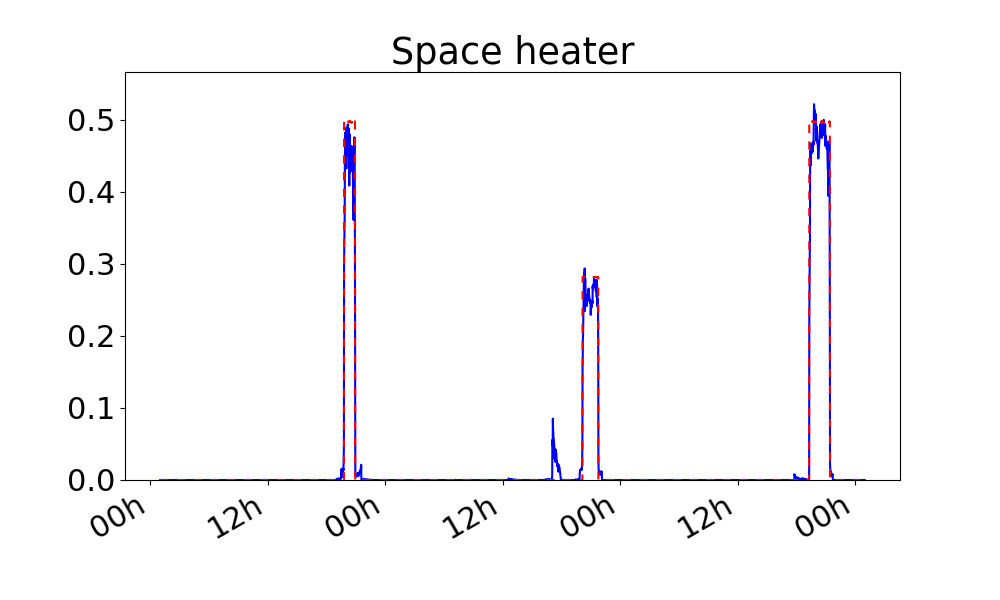}%
\includegraphics[width=\w\textwidth]{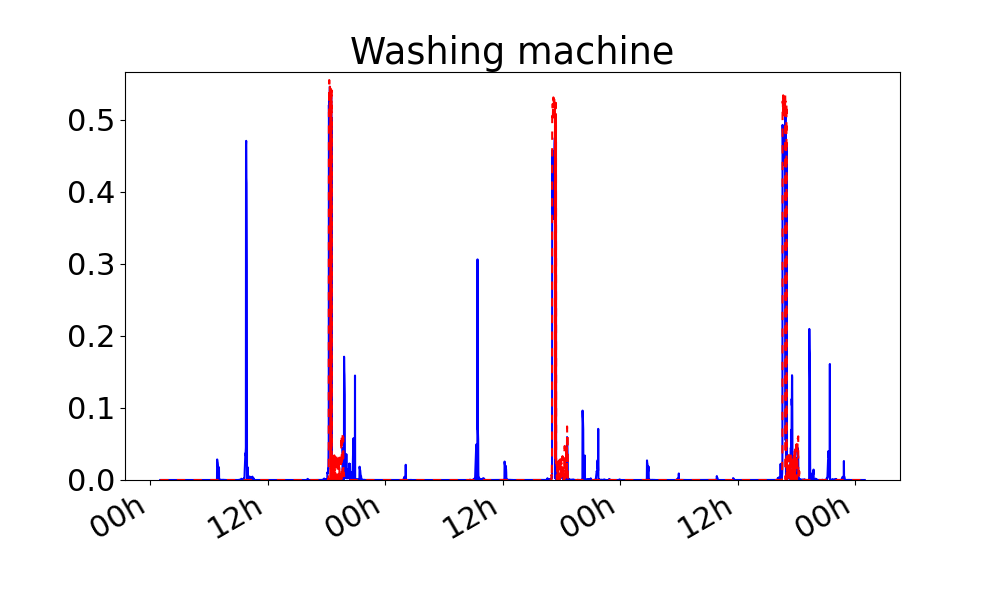}
 \end{tabular}}

\centering
\subfloat[Results on synd2]{
\centering
\begin{tabular}[t]{c}%
\includegraphics[width=\w\textwidth]{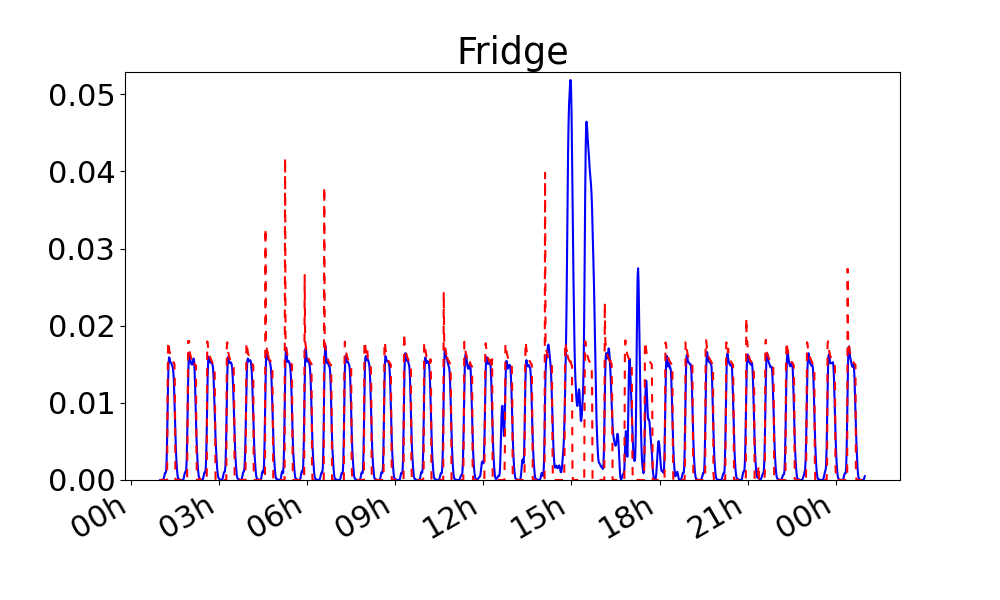}%
\includegraphics[width=\w\textwidth]{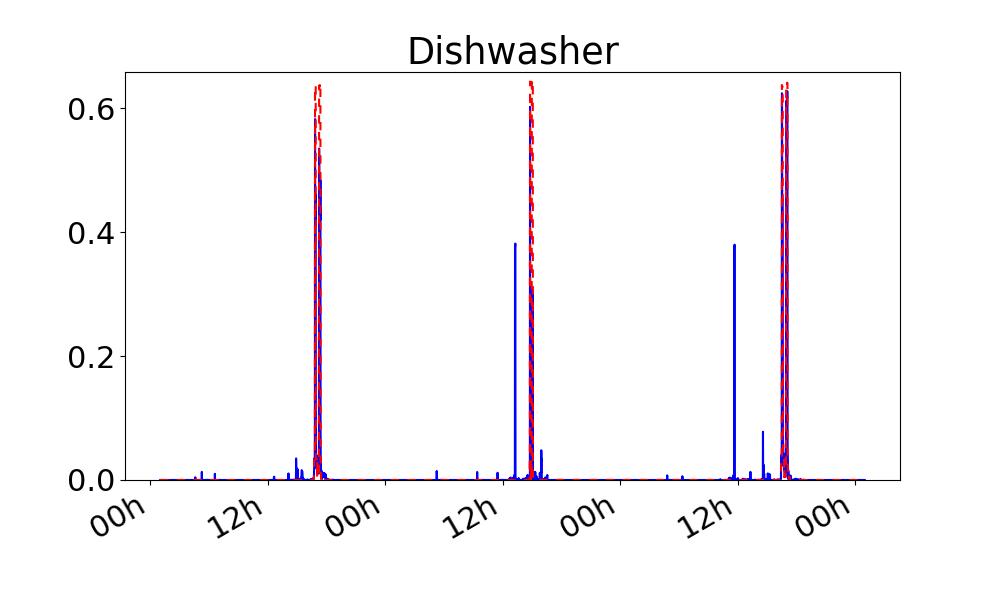}\\
\includegraphics[width=\w\textwidth]{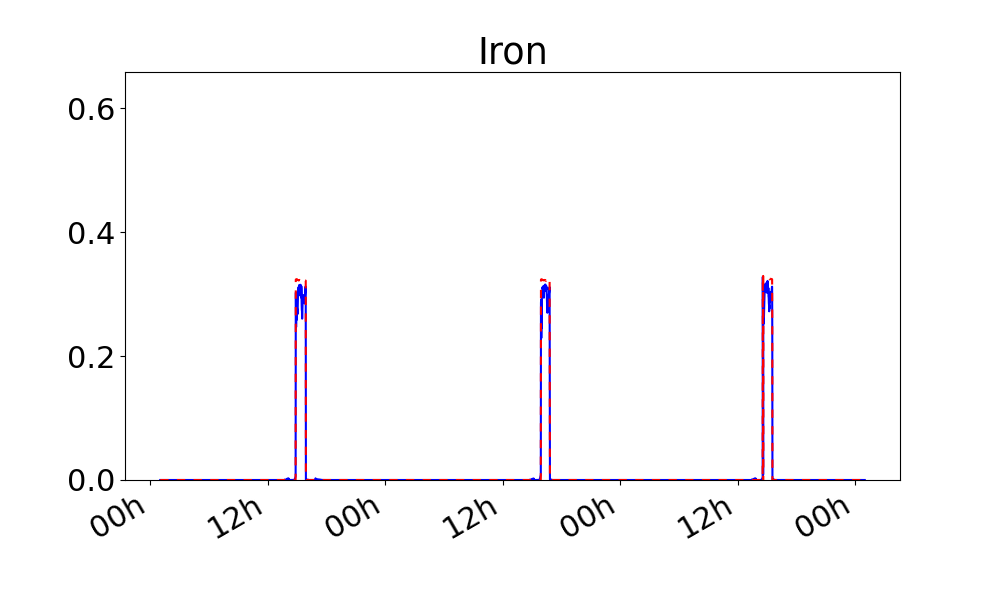}%
\includegraphics[width=\w\textwidth]{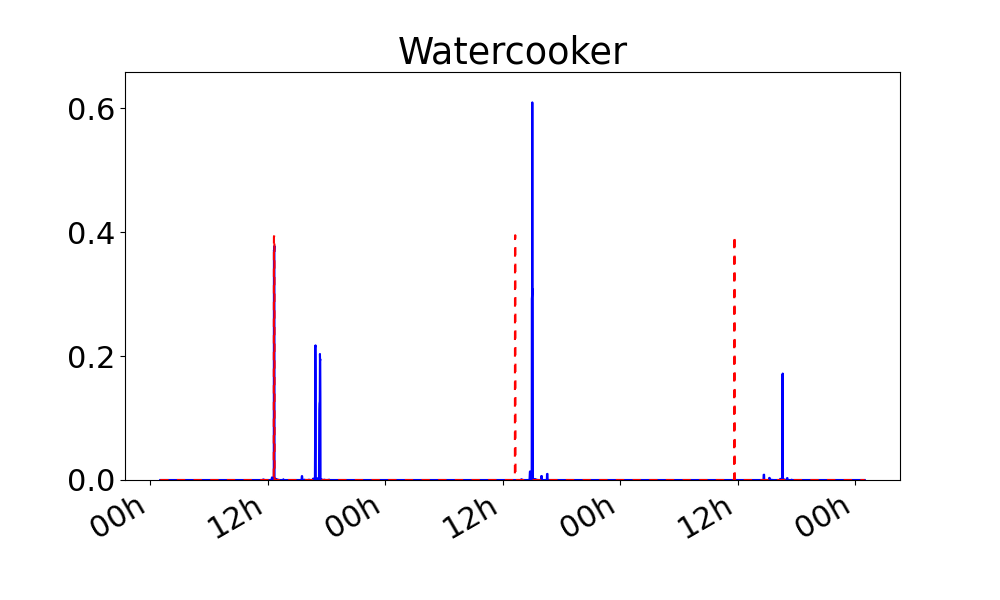}
 \end{tabular}}

\caption{Predictions of individual devices with CSC. Results for inference with training on the respective other data set.}
\label{fig:multichannel}
\end{figure}

\section{Conclusion}
In this work, we developed a new unsupervised method for energy disaggregation based on convolutional sparse coding with a focus on classifying energy consumption as active or passive. Prior training is not needed, as the method relies only on the choice of hyperparameters, which can be fixed for a broad class of input data. We employ the iPALM algorithm to minimize the used objective functional, ensuring strong convergence guarantees compared to many competing methods. As the results of our numerical experiments show, our unsupervised approach performs comparable to the state of the art in activity prediction in terms of classification scores, and yields superior performance in terms of NMSE and NMAE scores (see \Cref{table:supervised_activity,table:unsupervised_activity}). Further, it recognizes specific energy consumption signatures better (see \Cref{fig:activity}), and offers more flexibility due to the untrained approach. Applied to unsupervised multichannel disaggregation, our method performs slightly superior to the comparison method GSP \cite{zhao2016training}.

\newcommand{\vs}{-50pt}
\newcommand{\h}{2.5cm}
\bibliographystyle{abbrv}
\bibliography{references.bib}
\if\arxive0
\vspace{-40pt}
\begin{IEEEbiographynophoto}{Christian Aarset}
concluded his MSc in Mathematics at the University of Oslo, and his PhD in Mathematics at the University of Klagenfurt.%
\end{IEEEbiographynophoto}
\vspace{-40pt}
\begin{IEEEbiographynophoto}{Andreas Habring}
received his MSc in Mathematics from Graz University of Technology. He currently does his PhD in applied Mathematics at the University of Graz supervised by Martin Holler.%
\end{IEEEbiographynophoto}
\vspace{-40pt}
\begin{IEEEbiographynophoto}{Martin Holler} received his MSc and his PhD with a "promotio sub auspiciis" in Mathematics from the University of Graz. After research stays at the University of Cambridge, UK, and the Ecole Polytechnique, Paris, he currently holds a Assistant Professor position at the Institute of Mathematics and Scientific Computing of the University of Graz. %
\end{IEEEbiographynophoto}
\vspace{-40pt}
\begin{IEEEbiographynophoto}{Mario Mitter}
received his MSc and PhD in Physics and his MSc in Mathematics, all  "with honors", from the University of Graz. After research stays at the University of Heidelberg, DE, and the Brookhaven National Laboratory, Upton, NY, he switched to the field of Data Science in the private sector in 2019. %
\end{IEEEbiographynophoto}
\fi

\end{document}